  \DeclareSymbolFont{AMSb}{U}{msb}{m}{n}
  \DeclareSymbolFontAlphabet{\mathbb}{AMSb}}  %%%AMS Mathbb
\theoremstyle{plain}
\newtheorem{thm}{Theorem}[section]
\newtheorem{prop}[thm]{Proposition}
\theoremstyle{definition}
\newtheorem{defn}[thm]{Definition}
\newtheorem{example}[thm]{Example}
\theoremstyle{remark}
\numberwithin{equation}{section}
\def\th@plain{%
  \thm@notefont{}% same as heading font
  \itshape % body font
}
\def\th@definition{%
  \thm@notefont{}% same as heading font
  \normalfont % body font
} \makeatother
\setlist{font=\normalfont}
\DeclareMathAlphabet{\cols}{OMS}{cmsy}{m}{n} %
\newcommand{\Bp}[1]{\left(#1\right)}
\newcommand{\lbar}{\overline}
\newcommand{\GL}[1]{\mathrm{GL}({#1})}
\newcommand{\F}{\mathbb{F}}
\newcommand{\set}[1]{\{#1\}}
\newcommand{\C}{\mathbb{C}}
\newcommand{\R}{\mathbb{R}}
\newcommand{\D}{\mathbb{D}}
\newcommand{\cset}[2]{\set{{#1}\colon{#2}}}
\newcommand{\abs}[1]{|#1|}
\newcommand{\gyr}[2]{{\mathrm{gyr}[{#1}]}{#2}}
\newcommand{\id}[1]{\mathrm{id}_{#1}}
\newcommand{\lsum}[2]{\displaystyle\sum_{#1}^{#2}}
\newcommand{\gen}[1]{\langle#1\rangle}
\newcommand{\gyrL}[1]{L^\mathrm{gyr}(#1)}
\newcommand{\igyr}[2]{{\mathrm{gyr^{-1}}[{#1}]}{#2}}
\newcommand{\Cset}[2]{\left\{{#1}\colon{#2}\right\}}
\newcommand{\qt}[1]{``#1''}
\newcommand{\red}[1]{\begin{color}{black}#1\end{color}}
\DeclareMathOperator{\im}{im}
\DeclareMathOperator{\aut}{Aut}
\DeclareMathOperator{\cha}{char}
\DeclareMathOperator{\Fix}{Fix}
\newcommand{\vphi}{\varphi}
\begin{document}
% ---------------------TITLE----------------------------------
\title{\bf Extension of Maschke's theorem\footnote{This research is part of the project {\it Gyrogroups and their linear representations}, funded by the Institute for Promotion of Teaching Science and Technology (IPST), Thailand.
}}
\author{Teerapong Suksumran\,\href{https://orcid.org/0000-0002-1239-5586}{\includegraphics[scale=1]{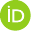}}\\
Center of Excellence in Mathematics and Applied Mathematics\\
Department of Mathematics\\
Faculty of Science, Chiang Mai University\\
Chiang Mai 50200, Thailand\\
{\tt teerapong.suksumran@cmu.ac.th}}
\date{}
\maketitle

%Linear representations of finite gyrogroups:\\ Maschke's Theorem

%Abstract-----------------------------------------------------
\begin{abstract}
In the present article, we examine linear representations of finite gyrogroups, following their group-counterparts. In particular, we prove the celebrated theorem of Maschke for  gyrogroups, along with its converse. This suggests studying the left regular action of a gyro-group $(G, \oplus)$ on the function space
$$
L^{\mathrm{gyr}}(G) = \{f\in L(G)\colon \forall a, x, y, z\in G, f(a\oplus\gyr{x, y}{z}) = f(a\oplus
z)\}
$$
in a natural way, where $L(G)$ is the space of all functions from $G$ into a field.
\end{abstract}
\textbf{Keywords.} Maschke's theorem, linear representation, gyrogroup action, left regular representation, gyrogroup.\\[3pt]
\textbf{2010 MSC.} Primary 20G05; Secondary 20N05.
\thispagestyle{empty}

\section{Introduction}
Maschke's theorem for groups states that if $G$ is a finite group and if $\F$ is a field whose characteristic does not divide the order of $G$ (\mbox{including} fields of characteristic zero), then the group algebra $\F[G]$ of $G$ over $\F$ is \mbox{semisimple} or, equivalently, every submodule of an $\F[G]$-module is a direct summand. In terms of linear representations, Maschke's theorem states, under the same hypotheses, that if $\vphi\colon G\to \GL{V}$, where $V$ is a vector space over $\F$, defines a linear representation, then every invariant subspace of $V$ has an invariant direct sum complement. The famous \mbox{theorem} of Maschke is still far from being exhausted, as one can see for \mbox{example} in \cite{DP1983IEM, WZLZ2011MTS, MD1981MTS, MGMS1996MVM} and, of course, in this  article. Inspired by the latter form of Maschke's theorem, we generalize this to {\it gyrogroups}, a suitable extension of groups.  The converse to Maschke's theorem treats the case where the \mbox{characteristic} of $\F$ divides the order of $G$ and motivates the theory of \mbox{\it modular} representations.

In \cite{TS2015GAG} we introduce the notion of gyrogroup actions, which amounts to that of {\it permutation} representations of a gyrogroup $G$ on a nonempty set $X$, as a generalization of group actions. This results in gyrogroup \mbox{versions} of three well-known theorems in group theory: the orbit-stabilizer theorem, the orbit decomposition theorem, and the Burnside lemma (or the Cauchy--Frobenius lemma). When $X$ admits the linear structure, the method of \mbox{gyrogroup} action yields {\it linear} representations: to represent elements of $G$ by linear transformations on $X$, as discussed in some detail in \cite{TSKW2017MFE}. The notable result in \cite{TSKW2017MFE} is Schur's lemma for \mbox{gyrogroups}, with applications to the open unit disk of the complex plane. The study of permutation and linear representations of gyrogroups leads to a better understanding of \mbox{gyrogroup} structures from the algebraic viewpoint. For this reason, we continue to examine linear representations of (finite) gyrogroups, \mbox{especially} Maschke's theorem and its converse.

\section{Preliminaries}
\subsection{Gyrogroups and their basic properties}
Gyrogroups first arose as an algebraic structure that underlies the space of relativistically admissible velocities in $\R^3$ in special relativity \cite{AU2007EVA}. They provide a powerful tool for studying analytic hyperbolic geometry; see, for instance, \cite{AU2008AHG} and references therein. \red{For an introduction to the formation of a gyrogroup, we refer the reader to Preface and Chapter 1 of \cite{AU2008AHG}.} The formal definition of a gyrogroup is as follows. Let $G$ be a nonempty set equipped with a binary operation $\oplus$ on $G$ and let $\aut{G}$ be the group of automorphisms of $(G, \oplus)$.

\begin{defn}[Gyrogroups]\label{def: gyrogroup}
A system $(G,\oplus)$ is called a {\it gyrogroup} if its binary
operation satisfies the following axioms.\newpage
\begin{enumerate}[label=(\alph*),leftmargin=1.5\parindent]
    \item[(G1)] There is an element $e\in G$ such that $e\oplus a =
    a$ for all $a\in G$.%\hfill(identity)
    \item[(G2)] For each $a\in G$, there is an element $b\in G$ such that
$b\oplus a = e$.%\hfill(inverse)
    \item[(G3)] For all $a$, $b\in G$, there is an automorphism
$\gyr{a,b}{}\in\aut{G}$ such that
    \begin{equation}\tag{left gyroassociative law} a\oplus (b\oplus c) = (a\oplus b)\oplus\gyr{a,
    b}{c}\end{equation}
    for all $c\in G$.
    \item[(G4)] For all $a$, $b\in G$, $\gyr{a\oplus b, b}{} = \gyr{a,
    b}{}$.\hfill(left loop property)
\end{enumerate}
\end{defn}

\red{
The axioms in Definition \ref{def: gyrogroup} imply the right counterparts. In fact, they imply that any gyrogroup has a unique two-sided identity, denoted by $e$ (cf. Theorems 2.10(5) and 2.10(6) of \cite{AU2008AHG}), and that any element $a$ in a \mbox{gyrogroup} has a unique two-sided inverse, denoted by $\ominus a$ (cf. Theorems 2.10(7) and 2.10(8) of \cite{AU2008AHG}). Furthermore, any gyrogroup satisfies the {\it right \mbox{gyroassociative} law}, 
\begin{equation}
(a\oplus b)\oplus c = a\oplus(b\oplus\gyr{b, a}{c}),
\end{equation}
as well as the {\it right loop property}, 
\begin{equation}
\gyr{a, b\oplus a}{} = \gyr{a, b}{}.
\end{equation}
Another useful identity in gyrogroups is the {\it inversively symmetric property},
\begin{equation}
\igyr{a, b}{} = \gyr{b, a}{},
\end{equation}
where $\igyr{a, b}{}$ denotes the inverse function of $\gyr{a, b}{}$; see Theorem 2.34 of \cite{AU2008AHG} for its proof.
}

It turns out that gyrogroups share remarkable analogies with groups; see, for instance, \cite{TS2015GAG, TS2016TAG, TSKW2017MFE, TSKW2014LTG}. The most interesting part of a gyrogroup is the \mbox{automorphism} $\gyr{a, b}{}$ \red{mentioned} in Definition \ref{def: gyrogroup}, called the {\it gyroautomorphism} generated by $a$ and $b$. This is because the gyroautomorphisms encode all the information about the {\it gyroassociative law}, an analogue of the \mbox{associative} law in group theory. Further, \red{the loop property} plays a \mbox{fundamental} role in gyrogroup theory, as it forces gyrogroups to have rigid structures. Note that any gyrogroup with {\it trivial} gyroautomorphisms forms a group and, conversely, any group forms a gyrogroup by defining the gyroautomorphisms to be the identity automorphism. The \red{study of basic properties of gyrogroups can be found, for example, in \cite{AU2008AHG, TS2016TAG}}.

\red{
\begin{example}
A prime example of a gyrogroup is the {\it M\"{o}bius gyrogroup}\cite{AU2008FMG}, consisting of the unit disk $\D=\cset{z\in\C}{\abs{z}<1}$ with M\"{o}bius addition \mbox{defined} by 
\begin{equation}
a\oplus_M b = \dfrac{a+b}{1+\lbar{a}b},\qquad a, b\in\D.
\end{equation}
Given $a, b\in\D$, the gyroautomorphism of $(\D, \oplus_M)$ generated by $a$ and $b$ is defined by
\begin{equation}\label{eqn: Mobius gyroautomorphism}
\gyr{a, b}{z} = \dfrac{1+a\lbar{b}}{1+\lbar{a}b}z
\end{equation}
for all $z\in\D$. Since $\dfrac{1+a\lbar{b}}{1+\lbar{a}b}$ is a unimodular complex number, Equation \eqref{eqn: Mobius gyroautomorphism} represents a rotation of the unit disk. This justifies the use of the prefix \qt{gyro}. We remark that $(\D, \oplus_M)$ does not form a group because M\"{o}bius addition fails to satisfy the associative law.
\end{example}

Examples of {\it nondegenerate} finite gyrogroups (that is, finite gyrogroups that are not groups) do exist; see, for instance, Example \ref{ex: G8} below. According to Theorem 6.4 of \cite{HK2002TKL}, gyrogroups and left Bol loops in which every left inner mapping  is an automorphism are equivalent algebraic structures (for the relevant definitions, see Chapter 6 of \cite{HK2002TKL}). This combined with Burn's results (Theorems 4 and 5 of \cite{RB1978FBL}) implies that finite gyrogroups of orders $2p$ and $p^2$, where $p$ is a prime, are groups. Further, any gyrogroup of prime order is a group (cf. Theorem 6.2 of \cite{TSKW2014LTG}). Therefore, the smallest possible order of a nondegenerate finite gyrogroup is $8$; one example is presented below. For a construction of a gyrogroup from a nilpotent group of class $3$, we refer the reader to Corollary 3.8 of \cite{TFAU2001GDG}.

\begin{example}\label{ex: G8}
The gyrogroup $G_8 = \set{0, 1, \ldots, 7}$ is exhibited in Example 1 on p. 404 of \cite{TS2016TAG}. Its Cayley table and gyration table are presented in Tables \ref{tab: gyrogroup G8} and \ref{tab: gyration table of G8}, respectively. There are only two gyroautomorphisms of $G_8$; one is the identity automorphism denoted by $I$ and the other is the automorphism $\tau$ given by 
\begin{eqnarray}\label{eqn: gyroautomorphism of G8}
\begin{split}
0 \mapsto 0 &{\hskip1cm}& 4 \mapsto 6\\
1 \mapsto 1 &{\hskip1cm}& 5 \mapsto 7\\
2 \mapsto 2 &{\hskip1cm}& 6 \mapsto 4\\
3 \mapsto 3 &{\hskip1cm}& 7 \mapsto 5
\end{split}
\end{eqnarray}
Since $\tau$ is nontrivial, $G_8$ does not form a group.

\begin{table}[ht]
\centering\red{
\begin{tabular}{|c|cccccccc|}
\hline
$\oplus$ & 0 & 1 & 2 & 3 & 4 & 5 & 6 & 7 \\ \hline
0 & 0 & 1 & 2 & 3 & 4 & 5 & 6 & 7 \\
1 & 1 & 3 & 0 & 2 & 7 & 4 & 5 & 6 \\
2 & 2 & 0 & 3 & 1 & 5 & 6 & 7 & 4 \\
3 & 3 & 2 & 1 & 0 & 6 & 7 & 4 & 5 \\
4 & 4 & 5 & 7 & 6 & 3 & 2 & 0 & 1 \\
5 & 5 & 6 & 4 & 7 & 2 & 0 & 1 & 3 \\
6 & 6 & 7 & 5 & 4 & 0 & 1 & 3 & 2 \\
7 & 7 & 4 & 6 & 5 & 1 & 3 & 2 & 0 \\ \hline
\end{tabular}}
\caption{Cayley table for the gyrogroup
$G_8$.}\label{tab: gyrogroup G8}
\end{table}

\begin{table}[ht]
\centering\red{
\begin{tabular}{|c|cccccccc|}
\hline
$\mathrm{gyr}$ & 0 & 1 & 2 & 3 & 4 & 5 & 6 & 7 \\ \hline
0 & $I$ & $I$ & $I$ & $I$ & $I$ & $I$ & $I$ & $I$ \\
1 & $I$ & $I$ & $I$ & $I$ & $\tau$ & $\tau$ & $\tau$ & $\tau$ \\
2 & $I$ & $I$ & $I$ & $I$ & $\tau$ & $\tau$ & $\tau$ & $\tau$ \\
3 & $I$ & $I$ & $I$ & $I$ & $I$ & $I$ & $I$ & $I$ \\
4 & $I$ & $\tau$ & $\tau$ & $I$ & $I$ & $\tau$ & $I$ & $\tau$ \\
5 & $I$ & $\tau$ & $\tau$ & $I$ & $\tau$ & $I$ & $\tau$ & $I$ \\
6 & $I$ & $\tau$ & $\tau$ & $I$ & $I$ & $\tau$ & $I$ & $\tau$ \\
7 & $I$ & $\tau$ & $\tau$ & $I$ & $\tau$ & $I$ & $\tau$ & $I$ \\ \hline
\end{tabular}}
\caption{Gyration table for $G_8$. The automorphism $\tau$ is given by \eqref{eqn: gyroautomorphism of G8}.}\label{tab: gyration table of G8}
\end{table}
\end{example}
}

\subsection{Actions and linear representations of gyrogroups}

The notion of linear representations of a (finite or infinite) gyrogroup was formulated in \cite{TSKW2017MFE} in order to study the M\"{o}bius functional equation,
$$
L\Bp{a\oplus_M b} = L(a)L(b),
$$
where $L$ is a complex-valued function defined on $\D$. Next, we summarize basic knowledge of linear representations of a gyrogroup mentioned in \cite{TSKW2017MFE} for reference.

Let $(G, \oplus)$ be a gyrogroup, let $V$ be a vector space over an arbitrary field $\F$, and let $\GL{V}$ be the general linear group of $V$. A {\it linear representation} $\vphi$ of $G$ on $V$, denoted by $(V, \vphi)$, is a gyrogroup homomorphism from $G$ into $\GL{V}$; that is, $\vphi\colon G\to \GL{V}$ satisfies
$$
\vphi(a\oplus b) = \vphi(a)\circ\vphi(b)
$$
for all $a, b\in G$. A {\it linear action} of $G$ on $V$ is a map from $G\times V$ to $V$, written $(a, v)\mapsto a\cdot v$, such that
\begin{enumerate}
    \item\label{item: 0.v} $e\cdot v = v$ for all $v\in V$, $e$ being the identity of $G$,
    \item\label{item: a.(b.v)} $a\cdot(b\cdot v) = (a\oplus b)\cdot v$ for all $a$, $b\in
    G$, $v\in V$, and
    \item\label{item: sigma a is linear} for each $a\in G$, the map defined by
    \begin{equation}\label{eqn: induced linear map}
    v\mapsto a\cdot v,\qquad v\in V,
    \end{equation}
    is a linear transformation on $V$.
\end{enumerate}
In this case, $G$ is said to act {\it linearly} on $V$. According to Lemma 3.3 and Theorem 3.4 of \cite{TSKW2017MFE}, if $G$ acts linearly on $V$, then the map $\vphi(a)\colon v\mapsto a\cdot v$, $v\in V$, defines a linear automorphism of $V$ for all $a\in G$, and the map $a\mapsto \vphi(a)$, $a\in G$, defines a linear representation of $G$ on $V$. Conversely, if $\vphi$ is a linear representation of $G$ on $V$, then $G$ acts linearly on $V$ by defining $a\cdot v = \vphi(a)(v)$ for all $a\in G, v\in V$ \cite[Theorem 3.5]{TSKW2017MFE}. Recall that the {\it degree} or {\it dimension} of a linear representation $(V, \vphi)$ is defined as the dimension of $V$.

Let $G$ act linearly on $V$. A subspace $W$ of $V$ is said to be {\it invariant} if $\vphi(a)(W)\subseteq W$ for all $a\in G$ or, equivalently, $a\cdot w\in W$ for all $a\in G, w\in W$. Let $(V, \vphi)$ and $(W, \psi)$ be linear representations of $G$. A  linear transformation $\Phi\colon V\to W$ is called  an {\it intertwining map} if
\begin{equation}
\Phi\circ\vphi(a) = \psi(a)\circ\Phi
\end{equation}
for all $a\in G$; that is, if
\begin{equation}
\Phi(a\cdot v) = a\cdot\Phi(v)
\end{equation}
for all $a\in G$, $v\in V$. A bijective intertwining map is called an {\it equivalence}.   If there exists an equivalence from $V$ to $W$, we say that $(V, \vphi)$ and $(W, \psi)$ are {\it equivalent}. Of course, equivalent representations are algebraically identical and carry the same algebraic information.

\begin{defn}[Definition 3.9, \cite{TSKW2017MFE}]\label{def: irreducible representation}
A linear representation of $G$ on $V$ is {\it irreducible} if  the only invariant subspaces of $V$ are $\set{0}$ and $V$ itself.
\end{defn}

\begin{defn}[Decomposable representations]\label{def: decomposable representation}
A linear representation $(V, \vphi)$ of a gyrogroup is {\it decomposable} if there are nontrivial proper \mbox{invariant} subspaces $U$ and $W$ of $V$ such that $V = U\oplus W$. A linear representation is {\it indecomposable} if it \red{is} not decomposable.
\end{defn}

It follows directly from Definitions \ref{def: irreducible representation} and \ref{def: decomposable representation} that every irreducible linear representation of a gyrogroup is indecomposable. Irreducible linear representations of a gyrogroup are studied in some detail in \cite{TSKW2017MFE}.

\begin{defn}[Completely reducible representations]\label{def: completely reducible}
A linear representation $(V, \vphi)$ of a gyrogroup is {\it completely reducible} if there are invariant subspaces $V_1, V_2,\ldots, V_n$ of $V$ such that $V = V_1\oplus V_2\oplus\cdots\oplus V_n$ and the only invariant subspaces of $V_i$ are $\set{0}$ and $V_i$ itself for all $i = 1,2,\ldots, n$; that is, the restriction of $\vphi$ on $V_i$ is irreducible for all $i = 1,2,\ldots, n$.
\end{defn}

The following theorem indicates that the property of being irreducible (respectively, decomposable, indecomposable, and completely reducible) is an algebraic invariant of linear representations of gyrogroups.

\begin{thm}\label{thm: invarint of being irreducible, decomposable or completely reducible}
Let $(V, \vphi)$ and $(W, \psi)$ be equivalent linear representations of a gyrogroup. 
\begin{enumerate}
\item\label{item: irreducible representation, equivalent} If $\vphi$ is irreducible, then so is $\psi$.
\item\label{item: decomposable representation, equivalent} If $\vphi$ is decomposable, then so is $\psi$.
\item\label{item: completely reducible representation, equivalent} If $\vphi$ is completely reducible, then so is $\psi$.
\end{enumerate}
\end{thm}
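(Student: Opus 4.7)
The plan is to use a single observation: an equivalence $\Phi\colon V\to W$, being a linear isomorphism that satisfies $\Phi\circ\vphi(a) = \psi(a)\circ\Phi$ for every $a\in G$, induces a bijection $U\mapsto\Phi(U)$ between $\vphi$-invariant subspaces of $V$ and $\psi$-invariant subspaces of $W$ which preserves dimension, inclusion, properness, triviality, and direct sum decompositions. All three assertions fall out of this correspondence.

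First I would verify the key lemma. If $U\subseteq V$ is $\vphi$-invariant, then for every $a\in G$ and every $u\in U$ the intertwining identity gives $\psi(a)(\Phi(u)) = \Phi(\vphi(a)(u)) \in \Phi(U)$, so $\Phi(U)$ is $\psi$-invariant, and it is a subspace because $\Phi$ is linear. Applying the same reasoning to $\Phi^{-1}$, which is clearly an intertwining linear isomorphism from $(W,\psi)$ back to $(V,\vphi)$, produces the inverse correspondence. Since $\Phi$ is a linear isomorphism, the induced bijection sends $\set{0}$ to $\set{0}$ and $V$ to $W$, respects strict inclusions, and transports internal direct sums, so that $V = V_1\oplus\cdots\oplus V_n$ if and only if $W = \Phi(V_1)\oplus\cdots\oplus\Phi(V_n)$.

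Given this, part \eqref{item: irreducible representation, equivalent} is immediate: for any $\psi$-invariant $U'\subseteq W$, the preimage $\Phi^{-1}(U')$ is $\vphi$-invariant, hence equals $\set{0}$ or $V$, forcing $U' = \set{0}$ or $W$. Part \eqref{item: decomposable representation, equivalent} is equally immediate: a decomposition $V = U\oplus U''$ into nontrivial proper invariant summands transports to a decomposition $W = \Phi(U)\oplus\Phi(U'')$ into nontrivial proper invariant summands of $W$. For part \eqref{item: completely reducible representation, equivalent}, the restricted map $\Phi|_{V_i}\colon V_i\to \Phi(V_i)$ is an equivalence between the sub-representations $\vphi|_{V_i}$ and $\psi|_{\Phi(V_i)}$, so irreducibility of each $\vphi|_{V_i}$ transfers to $\psi|_{\Phi(V_i)}$ by part \eqref{item: irreducible representation, equivalent}.

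I expect no real obstacle here: the entire argument is linear-algebraic, and nothing gyrogroup-specific enters beyond the definition of an intertwining map. The only point worth being attentive to is that the direct-sum compatibility genuinely relies on $\Phi$ being both linear and bijective, which is precisely what is built into the definition of equivalence.
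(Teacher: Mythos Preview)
Your proposal is correct and follows essentially the same approach as the paper: both rest on the two facts that an equivalence carries (nontrivial proper) invariant subspaces to (nontrivial proper) invariant subspaces and that a linear isomorphism transports direct sum decompositions. Your write-up is simply more explicit than the paper's, which merely states these two facts and declares the theorem to follow.
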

\begin{proof}
The theorem follows from the fact that \hyperlink{x}{(i)} if $U$ is a nontrivial proper invariant subspace of $V$ and if $\Phi\colon V\to W$ is an equivalence, then $\Phi(U)$ is a nontrivial proper invariant subspace of $W$ and \hyperlink{x}{(ii)} if $\Phi\colon V\to W$ is a linear isomorphism and $V = V_1\oplus V_2\oplus\cdots\oplus V_n$ represents a direct sum decomposition, then so does $W = \Phi(V_1)\oplus\Phi(V_2)\oplus\cdots\oplus\Phi(V_n)$.
\end{proof}

\section{A gyrogroup version of Maschke's theorem}
It is well known that if $U$ is a subspace of a vector space $V$, then $U$ has a direct sum complement, a subspace $W$ of $V$ such that $V = U\oplus W$. When a linear representation $(V, \vphi)$ of a finite gyrogroup $G$ is given and $U$ happens to be an invariant subspace of $V$, $W$ can be \mbox{chosen} invariant. This remarkable property relies on the fact that the order of $G$ is finite and invertible in the base \red{field} and hence the traditional \qt{averaging trick} becomes available. The following theorem, which is of interest in its own right, is a preparation of {\it Maschke's theorem} for gyrogroups. Its proof is quite elaborate, as we intend to show how gyrogroup theory comes into play in representation theory.

\begin{thm}\label{thm: intertwining projection}
Let $(V, \vphi)$ be a linear representation of a finite gyrogroup $G$ over $\F$ and let $U$ be an invariant subspace of $V$. If
$\cha{\F} = 0$ or $\cha{\F}$ does not divide $\abs{G}$, then there exists a projection $\pi$ of $V$ onto $U$ that is an intertwining map. In other words, $\pi$ satisfies the following properties:
\begin{enumerate}
    \item $\pi$ is linear;
    \item $\pi(u) = u$ for all $u\in U$;
    \item $\pi^2 = \pi$;
    \item $\pi(a\cdot v) = a\cdot\pi(v)$ for all $a\in G, v\in V$.
\end{enumerate}
\end{thm}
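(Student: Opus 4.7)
My plan is to imitate the classical averaging argument from group representation theory, exploiting the fact that the homomorphism condition $\vphi(a\oplus b) = \vphi(a)\circ\vphi(b)$ forces $\vphi$ to land in the associative group $\GL{V}$, which silently erases the gyroautomorphism content. First, pick any linear projection $\pi_0\colon V\to U$ (produced, for instance, by extending a basis of $U$ to a basis of $V$). Since $\cha{\F}$ does not divide $\abs{G}$, the integer $\abs{G}$ is invertible in $\F$, and I set
$$\pi = \frac{1}{\abs{G}}\sum_{a\in G}\vphi(a)\circ\pi_0\circ\vphi(\ominus a).$$

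Properties (1)--(3) fall out by direct verification. Linearity is automatic. For (2), invariance of $U$ gives $\vphi(\ominus a)(u)\in U$ for $u\in U$, hence $\pi_0(\vphi(\ominus a)(u)) = \vphi(\ominus a)(u)$, and $\vphi(a)\circ\vphi(\ominus a) = \vphi(a\oplus\ominus a) = \vphi(e) = \id{V}$; the last equality follows from $\vphi(e) = \vphi(e\oplus e) = \vphi(e)\circ\vphi(e)$ together with $\vphi(e)\in\GL{V}$. For (3), the inclusion $\pi(V)\subseteq U$ is immediate, since $\pi_0(V)\subseteq U$ and each $\vphi(a)$ preserves $U$; then $\pi^2 = \pi$ reduces to (2).

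The genuine obstacle is property (4). The usual reindexing trick threatens to break, because $\oplus$ is not associative and $\ominus$ does not reverse sums on the nose. The observation that rescues the argument is that, although $\ominus(a\oplus b)$ and $\ominus b\oplus\ominus a$ may differ as elements of $G$, their images under $\vphi$ coincide:
$$\vphi(\ominus(a\oplus b)) = (\vphi(a)\circ\vphi(b))^{-1} = \vphi(\ominus b)\circ\vphi(\ominus a) = \vphi(\ominus b\oplus\ominus a),$$
where I have used $\vphi(\ominus a) = \vphi(a)^{-1}$, itself a consequence of $\vphi(a)\circ\vphi(\ominus a) = \id{V}$. Applying this identity with $(\ominus b, c)$ in place of $(a, b)$ shows that whenever $c = b\oplus a$, one has $\vphi(\ominus a) = \vphi(\ominus c\oplus b)$. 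Because the left gyrotranslation $a\mapsto b\oplus a$ is a bijection of $G$ by the left cancellation law, I may reindex
$$\vphi(b)\circ\pi = \frac{1}{\abs{G}}\sum_{a\in G}\vphi(b\oplus a)\circ\pi_0\circ\vphi(\ominus a)$$
by $c$ and recover exactly $\pi\circ\vphi(b)$. I expect this ``$\vphi$-collapses-gyrations'' observation to be the conceptual heart of the argument and the only place where care beyond the classical group-theoretic proof is required.
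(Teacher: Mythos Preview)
Your proposal is correct and follows essentially the same averaging argument as the paper: pick an arbitrary projection $\pi_0$ onto $U$, average the conjugates $\vphi(a)\circ\pi_0\circ\vphi(\ominus a)$, and verify the intertwining property by a reindexing that exploits the fact that $\vphi$ lands in the group $\GL{V}$ (so gyrations are killed). The only cosmetic difference is the choice of reindexing permutation: the paper substitutes $c=\ominus(\ominus a\oplus b)$ via $\iota\circ R_b\circ\iota$, whereas you substitute $c=b\oplus a$ via $L_b$ together with your identity $\vphi(\ominus a)=\vphi(\ominus c\oplus b)$; both amount to the same computation.
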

\begin{proof} %\let\qed\relax
By Theorem 1.4 of \cite{SR2008ALA}, there exists a subspace $W_0$ of $V$
such that \mbox{$V = U\oplus W_0$}. Let $\pi_0$ be the projection of $V$
onto $U$ associated to this direct sum decomposition. For each $a\in
G$, define a map $\phi(a)$ by
$$
\phi(a) = \vphi(a)\circ\pi_0\circ\vphi(\ominus a).
$$
Being the composition of linear transformations on $V$, $\phi(a)$ is
linear for all $a\in G$. Further, since $U$ is invariant,
$\phi(a)(V)\subseteq U$ for all $a\in G$. This implies
$$\phi(a)(u) = \vphi(a)(\pi_0(\vphi(\ominus a)(u))) = \vphi(a)(\vphi(\ominus a)(u)) = \vphi(e)(u) = u$$
for all $a\in G, u\in U$.

\par By assumption, $n := \abs{G}1 =
\underbrace{1+1+\cdots+1}_{\abs{G}\textrm{ copies}}$ is nonzero in
$\F$. Thus, $n$ is invertible in $\F$. Define
\begin{equation}
\pi = \frac{1}{n}\lsum{a\in G}{}\phi(a).
\end{equation}
Then $\pi$ is a linear transformation from $V$ into $U$ such that
$\pi(u) = u$ for all $u\in U$ and hence $\pi(\pi(v)) = \pi(v)$ for all $v\in V$. It is easy to see that $\pi$ is surjective. Next, we prove that $\pi$ is an intertwining map.
Let $b\in G$ and let $v\in V$. Using the fact that $\vphi$ is a gyrogroup homomorphism and $\vphi(\ominus a) = \vphi(a)^{-1}$ for all $a\in G$ \red{(cf. Proposition 32(2) of \cite{TS2016TAG})}, we obtain

\begin{align}
\begin{split}\label{eqn: in proof of Maschke theorem}
\pi(b\cdot v) &= \frac{1}{n}\lsum{a\in G}{}
(\vphi(a)\circ\pi_0\circ\vphi(\ominus a))(\vphi(b)(v))\\
{} &= \frac{1}{n}\lsum{a\in G}{}(\vphi(b)\circ\vphi(\ominus a\oplus
b)^{-1}\circ\pi_0\circ\vphi(\ominus a\oplus b))(v)\\
{} &= \frac{1}{n}\lsum{a\in G}{}(\vphi(b)\circ\vphi(\ominus(\ominus
a\oplus b))\circ\pi_0\circ\vphi(\ominus a\oplus b))(v)\\
{} &= \vphi(b)\Bp{\frac{1}{n}\lsum{a\in G}{}(\vphi(\ominus(\ominus
a\oplus b))\circ\pi_0\circ\vphi(\ominus a\oplus b))(v)}.
\end{split}
\end{align}
Let $\iota$ and $R_b$ be the maps defined by $\iota(a) = \ominus a$ and $R_b(a) = a\oplus b$ for all $a\in G$. Then $\iota$ is a permutation of $G$ and by Theorem 2.22 of \cite{AU2008AHG}, $R_b$ is a permutation
of $G$. So, the composite $\theta := \iota\circ R_b\circ \iota$ is a
permutation of $G$. Hence, if $a$ runs over all elements of
$G$, then so does $c = \theta(a) = \ominus(\ominus a\oplus b)$.
Therefore, \eqref{eqn: in proof of Maschke theorem} becomes
\begin{align*}
\pi(b\cdot v) &= \vphi(b)\Bp{\frac{1}{n}\lsum{c\in G}{}(\vphi(c)\circ\pi_0\circ\vphi(\ominus c)(v)}\\ 
{} &= \vphi(b)\Bp{\dfrac{1}{n}\lsum{c\in G}{}\phi(c)(v)}\\ 
{} &= \vphi(b)(\pi(v))\\ 
{} &= b\cdot\pi(v).\tag*{\qed}
\end{align*}
\let\qed\relax
\end{proof}

We are now in a position to prove Maschke's theorem for gyrogroups. The proof is analogous to the case of groups, as gyrogroups \red{share common properties with} groups.

\begin{thm}[Maschke's theorem]
Let $(V, \vphi)$ be a linear representation of a finite gyrogroup $G$ over $\F$. If $\cha{\F} = 0$ or $\cha{\F}$ does not divide
$\abs{G}$, then for any invariant subspace $U$ of $V$, there exists an invariant subspace $W$ of $V$ such that $V = U\oplus W$.
\end{thm}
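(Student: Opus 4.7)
The plan is to reduce this directly to the previous theorem, which supplied a $G$-equivariant projection $\pi\colon V\to U$. Given such $\pi$, the natural candidate for the invariant complement is $W := \ker\pi$, and the proof is then just a matter of checking that this $W$ works.

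First I would invoke Theorem \ref{thm: intertwining projection} to obtain a linear map $\pi\colon V\to U$ satisfying $\pi|_U = \id{U}$, $\pi^2 = \pi$, and $\pi(a\cdot v) = a\cdot \pi(v)$ for all $a\in G$ and $v\in V$. Since $\pi$ is idempotent with image $U$, standard linear algebra gives $V = \im\pi \oplus \ker\pi = U\oplus W$, where $W=\ker\pi$. Concretely, any $v\in V$ decomposes as $v = \pi(v) + (v-\pi(v))$ with $\pi(v)\in U$ and $\pi(v-\pi(v)) = \pi(v)-\pi^2(v)=0$, and $U\cap W=\{0\}$ since $u\in U$ implies $\pi(u)=u$.

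It then remains to verify that $W$ is invariant. For any $a\in G$ and $w\in W$, the intertwining property yields
\begin{equation}
\pi(a\cdot w) = a\cdot\pi(w) = a\cdot 0 = 0,
\end{equation}
where the last equality holds because $v\mapsto a\cdot v$ is linear (property \eqref{item: sigma a is linear} of a linear action). Hence $a\cdot w\in \ker\pi = W$, so $W$ is $G$-invariant, completing the proof.

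There is essentially no obstacle here: all the gyrogroup-theoretic content, in particular the averaging argument that required $\abs{G}$ to be invertible in $\F$ and the bijection $a\mapsto \ominus(\ominus a\oplus b)$ on $G$, was absorbed into Theorem \ref{thm: intertwining projection}. The present theorem is then a one-line consequence in exactly the same way that classical Maschke follows from the existence of an equivariant projection in the group case.
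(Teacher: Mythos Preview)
Your proposal is correct and follows essentially the same approach as the paper: invoke Theorem~\ref{thm: intertwining projection} to obtain the intertwining projection $\pi$, set $W=\ker\pi$, and verify $V=U\oplus W$ via the decomposition $v=\pi(v)+(v-\pi(v))$. The only cosmetic difference is that the paper cites an external lemma (Lemma~3.10 of \cite{TSKW2017MFE}) for the invariance of $\ker\pi$, whereas you verify it directly in one line.
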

\begin{proof}
Suppose that $U$ is an invariant subspace of $V$ and let $\pi$ be as in \mbox{Theorem} \ref{thm: intertwining projection}. Set
$W = \ker{\pi}$. We claim that $V = U\oplus W$. In fact, for each
\mbox{$v\in V$}, $v = \pi(v)+(v-\pi(v))$. Note that $$\pi(v-\pi(v)) =
\pi(v)-\pi(\pi(v)) = 0.$$ Hence, $v-\pi(v)\in \ker{\pi} = W$. This
proves $V = U+W$. If $u\in U\cap W$, then $u = \pi(u) = 0$. Thus,
$U\cap W = \set{0}$ and so the sum $V = U+W$ is direct. Since $\pi$
is an intertwining map of $V$, it follows that $W = \ker{\pi}$ is an invariant
subspace of $V$ by Lemma 3.10 of \cite{TSKW2017MFE}.
\end{proof}

It is clear from Definition \ref{def: completely reducible} that every irreducible linear representation of a gyrogroup is completely reducible. Next, we prove that every {\it finite-dimensional} linear representation of a {\it finite} gyrogroup over a particular field is completely reducible, as a consequence of Maschke's theorem.

\begin{thm}\label{thm: complete reducibility of finite representation of finite gyrogroup}
Let $G$ be a finite gyrogroup. If $\cha{\F} = 0$ or $\cha{\F}$ does not divide $\abs{G}$, then every finite-dimensional linear representation of $G$ over $\F$ is completely reducible.
\end{thm}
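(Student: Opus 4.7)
The plan is to deduce this from Maschke's theorem by induction on $\dim V$. Let $n = \dim V$. If $n = 0$, the statement is vacuous (interpret $V$ as the empty direct sum), and if $V$ is irreducible, then taking $V_1 = V$ already gives a decomposition witnessing complete reducibility. So assume $n \geq 1$ and $V$ is reducible, meaning there exists a nontrivial proper invariant subspace $U$ of $V$.

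Before invoking Maschke, I first note that the restriction of $\vphi$ to any invariant subspace $U$ of $V$ gives a linear representation of $G$ on $U$. Indeed, for each $a \in G$, the map $\vphi(a)\vert_U$ sends $U$ into $U$ by invariance, and since $\ominus a \in G$, the map $\vphi(\ominus a)\vert_U = (\vphi(a)\vert_U)^{-1}$ also sends $U$ into $U$, so $\vphi(a)\vert_U \in \GL{U}$. The assignment $a \mapsto \vphi(a)\vert_U$ is a gyrogroup homomorphism because $\vphi$ is. Moreover, any invariant subspace of $U$ relative to $\vphi\vert_U$ is automatically an invariant subspace of $V$ relative to $\vphi$, since the action is the same.

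Now apply Maschke's theorem (the preceding result in the excerpt) to obtain an invariant subspace $W$ of $V$ with $V = U \oplus W$. Because $U$ is a nontrivial proper subspace, both $\dim U < n$ and $\dim W < n$. By the inductive hypothesis, the representations $(U, \vphi\vert_U)$ and $(W, \vphi\vert_W)$ are completely reducible, say
\begin{equation*}
U = U_1 \oplus \cdots \oplus U_r, \qquad W = W_1 \oplus \cdots \oplus W_s,
\end{equation*}
where each $U_i$ is an irreducible invariant subspace of $U$ and each $W_j$ is an irreducible invariant subspace of $W$. By the observation above, each $U_i$ and each $W_j$ is an invariant subspace of $V$, and the only invariant subspaces of each summand are the trivial ones. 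Combining the decompositions yields
\begin{equation*}
V = U_1 \oplus \cdots \oplus U_r \oplus W_1 \oplus \cdots \oplus W_s,
\end{equation*}
which exhibits $V$ as a direct sum of irreducible invariant subspaces, as required by Definition \ref{def: completely reducible}.

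The argument is essentially routine once Maschke's theorem is available; the only subtlety worth flagging is the verification that restriction to an invariant subspace produces a bona fide linear representation of $G$ (needing the inverse $\vphi(\ominus a)$ to also preserve the subspace), and that irreducibility inside the summands lifts to irreducibility as invariant subspaces of $V$. Neither of these causes genuine trouble, so I do not expect a real obstacle beyond bookkeeping.
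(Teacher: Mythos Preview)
Your proof is correct and follows essentially the same route as the paper's: induction on $\dim V$, handling the irreducible case trivially and otherwise splitting $V = U \oplus W$ via Maschke's theorem, then applying the inductive hypothesis to each summand. Your write-up is slightly more careful than the paper's in explicitly justifying that restriction to an invariant subspace yields a representation and that irreducibility of the summands persists in $V$, but the overall argument is the same.
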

\begin{proof}
We proceed by induction on the dimension of a linear representation. Let $(V,  \vphi)$ be a finite-dimensional linear representation of $G$ over $\F$. If $\dim{(V, \vphi)} = 1$, then $V$ is one dimensional. Hence, $V$ contains no nontrivial proper subspaces and so $\vphi$ is irreducible. As noted earlier, $\vphi$ is completely reducible. Suppose that $\dim{(V, \vphi)} = n$. If $\vphi$ is irreducible, then $\vphi$ is completely reducible. Therefore, we may assume that $\vphi$ is not irreducible and so a nontrivial proper invariant subspace $U$ of $V$ exists. By Maschke's theorem, $V = U\oplus W$ for some invariant subspace $W$ of $V$. The inductive hypothesis implies that $U = U_1\oplus U_2\oplus\cdots\oplus U_m$ and $W = W_1\oplus W_2\oplus\cdots\oplus W_k$, where $U_i$ is an invariant subspace of $U$ for all $i$, $W_j$ is an invariant subspace of $W$ for all $j$, and  $U_i$ and $W_j$ have no non-trivial proper invariant subspaces  for all $i = 1,2,\ldots, m, j = 1,2,\ldots, k$. Since $V = U_1\oplus U_2\oplus\cdots\oplus U_m\oplus W_1\oplus W_2\oplus\cdots\oplus W_k$, it follows that $\vphi$ is completely reducible. This completes the induction.
\end{proof}

Theorem \ref{thm: complete reducibility of finite representation of finite gyrogroup} shows that the study of finite-dimensional linear representations of a finite gyrogroup $G$ over a field $\F$, where $\cha{\F} = 0$ or $\cha{\F}$ does not divide $\abs{G}$, reduces to the study of the irreducible ones. Furthermore, if $G$ happens to be a {\it gyrocommutative} gyrogroup and $\F$ is \mbox{algebraically} closed, then every irreducible linear representation of $G$ is one dimensional. This is a consequence of Schur's lemma for gyrogroups, see Section 3.2 of \cite{TSKW2017MFE} for more details. 

It is well known in the literature that the converse to Maschke's \mbox{theorem} for groups also holds: if the characteristic of $\F$ divides the order of a finite group $G$, then there \mbox{exists} an invariant subspace induced by the left \mbox{regular} representation of $G$ that has no an \mbox{invariant} direct sum  complement. This leads to the study of the left regular representation of a gyrogroup in Section \ref{sec: left regular representation} and eventually to part of the converse to Maschke's theorem for \mbox{gyrogroups} in Section \ref{sec: converse of Maschke's theorem}.

\section{The left regular representation}\label{sec: left regular representation}
In this section, we provide an extremely important example of a linear representation of an arbitrary gyrogroup, namely the {\it left regular representation}. This representation will play a crucial role in the study of the converse to Maschke's theorem, as shown in Section \ref{sec: converse of Maschke's theorem}.

\par Let $G$ be a gyrogroup and let $\F$ be a field. Denote by $L(G)$ the space of all functions from $G$ into $\F$.
That is,
\begin{equation}
L(G) = \cset{f}{f \textrm{ is a function from $G$ into $\F$}}.
\end{equation}
Recall that $L(G)$ is a vector space over $\F$ whose vector addition and scalar multiplication are defined pointwise. Note that the zero function $o\colon x\mapsto 0$, $x\in G$, is the zero vector in $L(G)$ and that $-f\colon x\mapsto -f(x)$, $x\in G$, is the inverse of $f$ with respect to vector addition. If $G$ is finite, then $L(G)$ is finite dimensional, as shown in the following theorem. 

\begin{thm}
Let $G$ be a gyrogroup. For each $a\in G$, define a map $\delta_a$ on $G$ by
\begin{equation}
\delta_a(x) = 
\begin{cases}
1 & \textrm{if }x = a\\
0 & \textrm{otherwise.}
\end{cases}
\end{equation}
If $G$ is finite, then $B = \cset{\delta_a}{a\in G}$ forms a basis for $L(G)$ so that $L(G)$ is of finite dimension $\abs{G}$.
\end{thm}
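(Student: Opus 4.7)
The plan is to verify the two defining properties of a basis, namely spanning and linear independence, using only the fact that $G$ is a finite set (the gyrogroup structure plays no role here, since $L(G)$ is defined pointwise). Throughout I will work with the candidate set $B = \{\delta_a : a \in G\}$ and exploit the sifting property $\delta_a(x) = [x = a]$.

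First I would prove that $B$ spans $L(G)$. Given any $f \in L(G)$, I would claim that
\[
f = \sum_{a \in G} f(a)\,\delta_a,
\]
a finite sum because $G$ is finite. To check this equality of functions, I would evaluate both sides at an arbitrary $x \in G$: the right-hand side evaluates to $\sum_{a \in G} f(a)\delta_a(x) = f(x)$, since every term vanishes except the one with $a = x$, which contributes $f(x)\cdot 1$. This matches the left-hand side, so $f \in \mathrm{span}(B)$.

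Next I would show that $B$ is linearly independent. Suppose $\sum_{a \in G} c_a \delta_a = o$ for some scalars $c_a \in \F$. Evaluating at an arbitrary $b \in G$ gives
\[
0 = o(b) = \sum_{a \in G} c_a \delta_a(b) = c_b,
\]
again because every $\delta_a(b)$ vanishes except when $a = b$. Since $b$ was arbitrary, all coefficients $c_a$ are zero, establishing linear independence.

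Combining these two facts, $B$ is a basis of $L(G)$. Finally, since the map $a \mapsto \delta_a$ is visibly injective (the functions $\delta_a$ and $\delta_{a'}$ differ at $x = a$ whenever $a \neq a'$), one has $|B| = |G|$, so $\dim L(G) = |G|$. There is no real obstacle in this argument; it is pure linear algebra on a finite index set, and the only care needed is to distinguish the scalar $0 \in \F$ from the zero function $o \in L(G)$ when writing out the linear independence step.
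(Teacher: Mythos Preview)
Your proof is correct and is precisely the standard argument the paper has in mind; the paper itself simply writes ``The proof of the theorem is immediate'' without further detail, and your write-up is exactly the verification one would supply to justify that claim.
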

\begin{proof}
The proof of the theorem is immediate.
\end{proof}

Unlike the situation of groups, $G$ does not act linearly on $L(G)$ for the associative law fails to hold in gyrogroups. This leads us to seek to find subspaces of $L(G)$ on which $G$ acts linearly. In fact, we define
\begin{equation}\label{eqn: definition of Lgyr(G)}
\gyrL{G}= \cset{f\in L(G)}{\forall a, x, y, z\in G, f(a\oplus\gyr{x, y}{z}) = f(a\oplus
z)}.
\end{equation}
It is straightforward to check that $\gyrL{G}$ forms a subspace of $L(G)$ and hence is a vector space over $\F$.

\begin{prop}\label{prop: relation between Lgyr(G) and L(G)}
Let $G$ be a gyrogroup. Then $\gyrL{G} = L(G)$ if and only if the
gyroautomorphisms of $G$ are trivial, that is, $G$ is a group.
\end{prop}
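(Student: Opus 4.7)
The statement is a biconditional, so I would split the proof into the two directions, with the easy direction first to set up notation.

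For the \emph{if} direction, assume every gyroautomorphism of $G$ is the identity, so that $\gyr{x,y}{z} = z$ for all $x, y, z \in G$. Then for every $f \in L(G)$ and every $a, x, y, z \in G$, the identity $f(a\oplus\gyr{x,y}{z}) = f(a\oplus z)$ holds trivially, so $L(G) \subseteq \gyrL{G}$. The reverse inclusion holds by the definition of $\gyrL{G}$ as a subset of $L(G)$, giving equality.

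For the \emph{only if} direction, assume $\gyrL{G} = L(G)$. The plan is to first reduce the defining condition to a pointwise statement about gyroautomorphisms, then use a separating family of functions to conclude triviality. Specifically, fix $x, y, z \in G$ arbitrarily. Since the defining equation in \eqref{eqn: definition of Lgyr(G)} holds for \emph{every} $f \in L(G)$ and every $a \in G$, specialize at $a = e$. By axiom (G1), $e\oplus w = w$ for all $w \in G$, so the condition collapses to $f(\gyr{x,y}{z}) = f(z)$ for all $f \in L(G)$.

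Now apply this to the indicator function $f = \delta_z$, which lies in $L(G)$ for any field $\F$. The equation yields $\delta_z(\gyr{x,y}{z}) = \delta_z(z) = 1$, and because $\delta_z$ takes the value $1$ only at $z$, we must have $\gyr{x,y}{z} = z$. Since $x, y, z$ were arbitrary, every gyroautomorphism $\gyr{x,y}{}$ equals $\id{G}$, which forces the left gyroassociative law to reduce to the ordinary associative law; together with (G1) and (G2), $(G,\oplus)$ is a group. There is essentially no obstacle here: the only subtle move is the specialization $a = e$, which transforms a quantifier-heavy condition on the whole Cayley structure into a statement that can be tested against the basis $\set{\delta_a : a \in G}$.
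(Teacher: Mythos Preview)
Your proof is correct and follows essentially the same approach as the paper: both specialize the defining condition at $a = e$ and then test against a delta function to force $\gyr{x,y}{z} = z$. The only cosmetic differences are that the paper argues by contradiction and evaluates the indicator at the target point $\delta_{\gyr{x,y}{z}}$, whereas you argue directly and use $\delta_z$; these are interchangeable.
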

\begin{proof}
The converse statement is obvious. Suppose that $\gyrL{G} = L(G)$. Suppose to the contrary that $\gyr{x, y}{}\ne\id{G}$ for some $x, y\in G$. Then $\gyr{x, y}{z}\ne z$ for some $z\in G$. Set $a = \gyr{x, y}{z}$. By assumption, $\delta_a$ is in $\gyrL{G}$, which implies 
$1 = \delta_a(a) = \delta_a(\gyr{x, y}{z}) = \delta_a(z)$ and hence $z = a = \gyr{x, y}{z}$, a contradiction. Thus,  all the gyroautomorphisms of $G$ are trivial.
\end{proof}

\par Let $a\in G$. Recall that the {\it left gyrotranslation} by $a$,
$L_a$, is a permutation of $G$ defined by $L_a(x) = a\oplus x$ for
all $x\in G$ \cite[Theorem 18]{TS2016TAG}. The main result of this section is presented in the following theorem, demonstrating that $G$ acts linearly on $\gyrL{G}$ in a natural way.

\begin{thm}
Let $G$ be a gyrogroup. Then $G$ acts linearly on $\gyrL{G}$ by
\begin{equation}\label{eqn: left regular action on Lgyr(G)}
a\cdot f = f\circ L_{\ominus a}
\end{equation}
for all $a\in G$, $f\in\gyrL{G}$.
\end{thm}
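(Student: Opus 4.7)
The plan is to verify the three axioms of a linear action, after first confirming that the formula $a\cdot f = f\circ L_{\ominus a}$ genuinely sends $\gyrL{G}$ into itself (the ambient space on which the action is being specified). Axioms (1) and (3) should be immediate: since $L_{\ominus e} = L_e = \id{G}$, we get $e\cdot f = f$; and the map $f \mapsto f\circ L_{\ominus a}$ is linear because vector operations on $L(G)$ are defined pointwise and $L_{\ominus a}$ is just a bijection of $G$. So the two pieces with real content are well-definedness and the compatibility axiom $a\cdot(b\cdot f) = (a\oplus b)\cdot f$.

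The key observation I would exploit throughout is that specialising the defining condition of $\gyrL{G}$ to the identity $e$ in the outer slot gives $f(\gyr{x,y}{z}) = f(z)$ for all $x,y,z\in G$, so every function in $\gyrL{G}$ is invariant under every individual gyroautomorphism. This is the gyrogroup-theoretic substitute for the identity $\ominus(a\oplus b) = \ominus b \ominus a$ that one uses silently in the group case, and it is the hinge on which both remaining steps turn.

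For well-definedness I would set $g = a\cdot f$ and compute both $g(b\oplus \gyr{x,y}{z})$ and $g(b\oplus z)$ by unwinding the definition of $g$, applying the left gyroassociative law to reshape $\ominus a \oplus (b \oplus \cdot)$ as $(\ominus a \oplus b) \oplus \gyr{\ominus a, b}{(\cdot)}$, and then using the defining property of $\gyrL{G}$ to strip away every gyration that lands inside the second summand. Both expressions collapse to $f((\ominus a \oplus b) \oplus z)$, whence $g \in \gyrL{G}$.

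For the compatibility axiom the two sides unwind to $(a\cdot(b\cdot f))(x) = f(\ominus b \oplus (\ominus a \oplus x))$ and $((a\oplus b)\cdot f)(x) = f(\ominus(a\oplus b) \oplus x)$. I would set $y = \ominus(a\oplus b)\oplus x$, so by left cancellation $x = (a\oplus b)\oplus y$, then invoke the right gyroassociative law to rewrite this as $x = a\oplus(b\oplus \gyr{b,a}{y})$, and apply left cancellation twice more to extract $\ominus b \oplus (\ominus a \oplus x) = \gyr{b,a}{y}$. Invariance of $f\in\gyrL{G}$ under a single gyration then gives $f(\gyr{b,a}{y}) = f(y)$, which is exactly $((a\oplus b)\cdot f)(x)$. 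The hard part of the whole argument is precisely this bridge between $\ominus b \oplus (\ominus a \oplus x)$ and $\ominus(a\oplus b)\oplus x$: these two elements of $G$ differ by a gyration, so equality fails at the level of $L(G)$, and the entire purpose of the subspace $\gyrL{G}$ is to consist of exactly those functions that are blind to such differences.
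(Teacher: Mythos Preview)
Your proof is correct. The handling of well-definedness, the identity axiom, and linearity is essentially identical to the paper's, and your key observation that $f(\gyr{x,y}{z}) = f(z)$ for $f\in\gyrL{G}$ is also made explicit there.

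Where you diverge from the paper is in the verification of $a\cdot(b\cdot f) = (a\oplus b)\cdot f$. The paper argues forward: it expands $f(\ominus b\oplus(\ominus a\oplus x))$ via the left gyroassociative law to $f((\ominus b\ominus a)\oplus\gyr{\ominus b,\ominus a}{x})$, invokes the defining property of $\gyrL{G}$ three separate times to freely insert and remove gyrations in either slot, and then closes with the gyrogroup identity $\ominus(a\oplus b)=\gyr{a,b}{(\ominus b\ominus a)}$. Your route instead works backward from a substitution $y=\ominus(a\oplus b)\oplus x$, uses the \emph{right} gyroassociative law and two left cancellations to identify $\ominus b\oplus(\ominus a\oplus x)$ as $\gyr{b,a}{y}$, and then applies gyration-invariance of $f$ just once. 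Your argument is a bit leaner---it avoids citing the inverse-of-a-sum formula and uses the $\gyrL{G}$ hypothesis only in its specialized form---while the paper's chain of equalities is more self-contained in that each step is a one-line rewrite with no auxiliary variable. Both make the same conceptual point you highlight at the end: the two expressions differ by a gyration, and $\gyrL{G}$ is precisely the subspace on which that difference vanishes.
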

\begin{proof}
Let $a\in G$ and let $f\in\gyrL{G}$. First, we prove that $a\cdot
f\in\gyrL{G}$. In fact, we have
\begin{eqnarray*}
(a\cdot f)(b\oplus\gyr{x, y}{z}) &=& (f\circ L_{\ominus
a})(b\oplus\gyr{x, y}{z})\\
{} &=& f(\ominus a\oplus(b\oplus\gyr{x, y}{z}))\\
{} &=& f((\ominus a\oplus b)\oplus\gyr{\ominus a, b}{(\gyr{x,
y}{z})})\\
{} &=& f((\ominus a\oplus b)\oplus\gyr{x, y}{z})\\
{} &=& f((\ominus a\oplus b)\oplus z)\\
{} &=& f((\ominus a\oplus b)\oplus\gyr{\ominus a, b}{z})\\
{} &=& f(\ominus a\oplus(b\oplus z))\\
{} &=& (f\circ L_{\ominus a})(b\oplus z)\\
{} &=& (a\cdot f)(b\oplus z)
\end{eqnarray*}
for all $b, x, y, z\in G$. Thus, $a\cdot f\in\gyrL{G}$.

\par Since $L_{\ominus e} = \id{G}$, we obtain $e\cdot f = f$ for all
$f\in\gyrL{G}$. Let $a, b\in G$ and let $f\in\gyrL{G}$. By the
defining property of $\gyrL{G}$, $f(\gyr{x, y}{z}) = f(z)$ for all
$x, y, z\in G$. Let $x\in G$. We compute
\begin{eqnarray*}
(a\cdot (b\cdot f))(x) &=& (b\cdot f)(\ominus a\oplus x)\\
{} &=& f(\ominus b\oplus(\ominus a\oplus x))\\
{} &\stackrel{\hypertarget{(a)}{\rm (a)}}{=}& f((\ominus b\ominus a)\oplus\gyr{\ominus b, \ominus a}{x})\\
{} &\stackrel{\hypertarget{(b)}{\rm (b)}}{=}& f((\ominus b\ominus a)\oplus x)\\
{} &\stackrel{\hypertarget{(c)}{\rm (c)}}{=}& f((\ominus b\ominus a)\oplus \gyr{b, a}{x})\\
{} &\stackrel{\hypertarget{(d)}{\rm (d)}}{=}& f(\gyr{a, b}{((\ominus b\ominus a)\oplus\gyr{b, a}{x}}))\\
{} &\stackrel{\hypertarget{(e)}{\rm (e)}}{=}& f(\gyr{a, b}{(\ominus b\ominus a)}\oplus x)\\
{} &\stackrel{\hypertarget{(f)}{\rm (f)}}{=}& f(\ominus (a\oplus b)\oplus x)\\
{} &=& (f\circ L_{\ominus(a\oplus b)})(x)\\
{} &=& ((a\oplus b)\cdot f)(x).
\end{eqnarray*}
We obtain \hyperlink{(a)}{(a)} from the left gyroassociative law; \hyperlink{(b)}{(b)}, \hyperlink{(c)}{(c)} and \hyperlink{(d)}{(d)} from \eqref{eqn: definition of Lgyr(G)}; \hyperlink{(e)}{(e)} from the fact that $\gyr{a, b}{}$
preserves $\oplus$ and $\gyr{b, a}{} = \igyr{a, b}{}$; and \hyperlink{(f)}{(f)} from the identity $\ominus (a\oplus b) = \gyr{a, b}{(\ominus b\ominus a)}$, \red{see Theorem 2.32 of \cite{AU2008AHG}}. Since $x$ is arbitrary, it follows that $a\cdot (b\cdot f) = (a\oplus b)\cdot f$.

\par For each $a\in G$, let $\lambda(a)$ be the map defined by  $\lambda(a)(f) = a\cdot f$ for all $f\in \gyrL{G}$. An easy verification shows that $\lambda(a)$ is a linear transformation on $\gyrL{G}$ for all $a\in G$. 
\end{proof}

\begin{defn}[The left regular representation]
The linear action given by \eqref{eqn: left regular action on Lgyr(G)} is called the {\it left regular action} of $G$ on $\gyrL{G}$ and its corresponding linear representation is called the {\it left regular
representation} of $G$ on $\gyrL{G}$.
\end{defn}

Let $G$ be a {\it finite} gyrogroup and define a map $\sigma$ by
\begin{equation}\label{eqn: linear functional on Lgyr(G)}
\sigma(f) = \lsum{a\in G}{}f(a),\qquad f\in L(G).
\end{equation}

\begin{thm}\label{thm: property of kernel sigma and Lgyr(G)}
Let $\sigma$ be the map defined by \eqref{eqn: linear functional on Lgyr(G)}. Then the following assertions hold:
\begin{enumerate}
\item\label{item: sigma linear functional}  $\sigma$ is a linear functional.
\item\label{item: dimension of ker sigma} $\dim{(\ker{\sigma})} = \abs{G}-1$.
\item\label{item: invariant subspace of sigma} $\gyrL{G}\cap\ker{\sigma}$ is an invariant subspace of $\gyrL{G}$ under the left regular action of $G$.
\item\label{item: dimension of gyrl(G) cap kernel sigma} either $\gyrL{G}\subseteq\ker{\sigma}$ or $\dim{(\gyrL{G}\cap\ker{\sigma})} = \dim{(\gyrL{G})} - 1$.
\end{enumerate}
\end{thm}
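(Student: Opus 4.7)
Parts (1) and (2) reduce to standard linear algebra. Linearity of $\sigma$ is immediate because each evaluation functional $f\mapsto f(a)$ is linear and $\sigma$ is their finite sum. For (2), pick any $a\in G$; then $\sigma(\delta_a)=1$, so $\sigma$ is a nonzero linear functional and hence surjective onto $\F$, and the rank--nullity theorem applied to $\sigma\colon L(G)\to\F$ yields $\dim(\ker\sigma)=\abs{G}-1$.

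For (3), the preceding theorem already guarantees that $\gyrL{G}$ is stable under the action $a\cdot f = f\circ L_{\ominus a}$, so it suffices to show that $\ker\sigma$ is also stable under this action on all of $L(G)$. My plan is to compute
$$
\sigma(a\cdot f) \;=\; \lsum{x\in G}{} f(\ominus a\oplus x),
$$
and then invoke the fact that $L_{\ominus a}$ is a permutation of $G$ (Theorem 18 of \cite{TS2016TAG}) to re-index the sum as $\lsum{y\in G}{} f(y)=\sigma(f)$. Consequently $\sigma(a\cdot f)=\sigma(f)$ for every $a\in G$ and $f\in L(G)$; in particular, any $f\in\gyrL{G}\cap\ker\sigma$ satisfies $a\cdot f\in\gyrL{G}\cap\ker\sigma$, which is precisely the required invariance.

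For (4), consider the restriction $\sigma_0:=\sigma|_{\gyrL{G}}\colon\gyrL{G}\to\F$, which is still a linear functional and whose kernel equals $\gyrL{G}\cap\ker\sigma$. Since the image of $\sigma_0$ is a subspace of the one-dimensional $\F$-space $\F$, it is either $\{0\}$ or all of $\F$. In the first case $\gyrL{G}\subseteq\ker\sigma$; in the second, rank--nullity applied to $\sigma_0$ yields $\dim(\gyrL{G}\cap\ker\sigma)=\dim(\gyrL{G})-1$. This dichotomy is exactly (4).

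No step is genuinely difficult: the only place where gyrogroup structure enters is (3), where it is fully absorbed by the bijectivity of $L_{\ominus a}$ established in the preliminaries, so neither the loop property nor the gyroassociative law is needed here. The one point worth articulating carefully is the second half of (4), where one must note that any nonzero $\F$-valued linear functional is automatically surjective because $\F$ has no nontrivial proper $\F$-subspaces; in particular, no hypothesis on $\cha{\F}$ or $\abs{G}$ is required for any clause of the theorem.
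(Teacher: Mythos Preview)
Your proposal is correct and follows essentially the same approach as the paper: the paper likewise dispatches (1) as immediate, proves (2) via rank--nullity using that $\sigma(\delta_e)\ne 0$, proves (3) by the same re-indexing along the bijection $L_{\ominus a}$ to see that the sum is unchanged, and proves (4) by restricting $\sigma$ to $\gyrL{G}$ and invoking rank--nullity according to whether the image is $\{0\}$ or $\F$. The only cosmetic difference is that you phrase (3) as the slightly stronger identity $\sigma(a\cdot f)=\sigma(f)$ for all $f\in L(G)$, whereas the paper works directly with $f\in \gyrL{G}\cap\ker\sigma$; the underlying computation is identical.
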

\begin{proof}
The proof of \eqref{item: sigma linear functional} is immediate. Since $\sigma$ is a nonzero linear functional \red{and} $\delta_e\not\in\ker{\sigma}$, it follows from the rank-nullity theorem that 
$$
\dim{(\ker{\sigma})} = \dim{(L(G))} - \dim{(\im{\sigma})} = \abs{G} - 1.
$$
This proves \eqref{item: dimension of ker sigma}.

\eqref{item: invariant subspace of sigma} Set $U = \gyrL{G}\cap\ker{\sigma}$. It is clear that
$$U = \Cset{f\in\gyrL{G}}{\lsum{a\in G}{}f(a) = 0}$$
and that $U$ forms a subspace of $\gyrL{G}$, being the intersection of subspaces. Next, we prove that $U$ is invariant under the action given by \eqref{eqn: left regular action on Lgyr(G)}. Let $g\in G, f\in U$. Then $g\cdot f\in\gyrL{G}$. We compute
\begin{equation}\tag{$\star$}\label{eqn: in proof invariant of kernel sigma}
\lsum{a\in G}{}(g\cdot f)(a) = \lsum{a\in G}{}(f\circ L_{\ominus g})(a) = \lsum{a\in G}{}f(L_{\ominus g}(a)).
\end{equation}
As $a$ runs over all elements of $G$ so does $L_{\ominus g}(a)$ for $L_{\ominus g}$ is a permutation of $G$. From this and \eqref{eqn: in proof invariant of kernel sigma}, we have
$$
\lsum{a\in G}{}(g\cdot f)(a) = \lsum{c\in G}{}f(c) = 0
$$
and so $g\cdot f\in\ker{\sigma}$. It follows that $g\cdot f\in U$, which proves that $U$ is invariant.

\eqref{item: dimension of gyrl(G) cap kernel sigma} Let $\vphi$ be the restriction of $\sigma$ on $\gyrL{G}$. Then $\vphi$ is a linear functional on $\gyrL{G}$. Further, $\ker{\vphi} = \gyrL{G}\cap\ker{\sigma}$.  As $\dim{\F} = 1$, $\dim{(\im{\vphi})} = 0$ or $\dim{(\im{\vphi})} = 1$. By the rank-nullity theorem,  $$\dim{(\ker{\vphi})} = \dim{(\gyrL{G})}$$ or $\dim{(\ker{\vphi})} = \dim{(\gyrL{G})} - 1$. If $\dim{(\ker{\vphi})} = \dim{(\gyrL{G})} - 1$, then we are done. We therefore assume that $\dim{(\ker{\vphi})} = \dim{(\gyrL{G})}$. In this case, we obtain 
$$
\gyrL{G}\cap\ker{\sigma} = \ker{\vphi} = \gyrL{G}
$$ 
and so $\gyrL{G} \subseteq \ker{\sigma}$. 
\end{proof}

Recall that the {\it fixed subspace} of $\gyrL{G}$ is defined as
\begin{equation}
\Fix{\gyrL{G}} = \cset{f\in\gyrL{G}}{a\cdot f = f\textrm{ for all }a\in G}.
\end{equation}
For each $\alpha\in\F$, define a map $f_{\alpha}$ by
\begin{equation}
f_{\alpha}(x) = \alpha,\qquad x\in G.
\end{equation}
It is clear that $f_{\alpha}\in\gyrL{G}$ for all $\alpha\in\F$.

\begin{thm}\label{thm: characterization of Fixed subspace}
Let $G$ be a gyrogroup. Then $\Fix{\gyrL{G}}$ is an invariant subspace of $\gyrL{G}$ and
$$
\Fix{\gyrL{G}} = \cset{f_\alpha}{\alpha\in\F}.
$$
Furthermore, $\set{f_1}$ is a basis for $\Fix{\gyrL{G}}$ so that $\Fix{\gyrL{G}}$ is one-dimensional.
\end{thm}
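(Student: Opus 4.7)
The plan is to prove the three claims in order: that $\Fix{\gyrL{G}}$ is a subspace invariant under the left regular action, that it equals the set of constant functions $\cset{f_\alpha}{\alpha\in\F}$, and that $\set{f_1}$ is a basis.

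For the first claim, I would check the subspace conditions directly. The zero function $o$ belongs to $\Fix{\gyrL{G}}$ since $a\cdot o = o$ for every $a\in G$, and for $f, g\in\Fix{\gyrL{G}}$ and $\alpha, \beta\in\F$, linearity of $\lambda(a)$ (established in the previous theorem) immediately gives
$$a\cdot(\alpha f+\beta g) = \alpha(a\cdot f)+\beta(a\cdot g) = \alpha f+\beta g,$$
so $\alpha f+\beta g\in\Fix{\gyrL{G}}$. Invariance is then trivial: if $f\in\Fix{\gyrL{G}}$ then $b\cdot f = f\in\Fix{\gyrL{G}}$ for any $b\in G$.

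For the second claim, the inclusion $\cset{f_\alpha}{\alpha\in\F}\subseteq\Fix{\gyrL{G}}$ is transparent: for any constant function $f_\alpha$ and any $a, x\in G$, $(a\cdot f_\alpha)(x) = f_\alpha(\ominus a\oplus x) = \alpha = f_\alpha(x)$. The key step is the reverse inclusion. Given $f\in\Fix{\gyrL{G}}$, I would evaluate the fixed-point condition $a\cdot f = f$ at the identity $e$. Using $a\oplus e = a$ (two-sided identity), this yields $f(\ominus a\oplus e) = f(\ominus a) = f(e)$ for every $a\in G$. Since the map $a\mapsto \ominus a$ is a bijection of $G$ (each element has a unique two-sided inverse), this forces $f(b) = f(e)$ for all $b\in G$, so $f = f_{f(e)}$ is constant. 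This gives the desired characterization.

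For the basis claim, any $f_\alpha$ equals $\alpha f_1$, so $\set{f_1}$ spans $\Fix{\gyrL{G}}$; and $f_1\ne o$ because $1\ne 0$ in $\F$, so $\set{f_1}$ is linearly independent. Hence $\Fix{\gyrL{G}}$ is one-dimensional. I expect no significant obstacle here — the only subtle point is making sure that in the characterization step we really do use $a\oplus e = a$ (which is a consequence of the existence of the two-sided identity noted in the Preliminaries) and the bijectivity of inversion; both are standard properties of gyrogroups already invoked elsewhere in the paper.
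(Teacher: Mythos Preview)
Your argument is correct and follows essentially the same route as the paper: the subspace and invariance checks are identical in spirit, and both proofs establish the characterization by evaluating the fixed-point condition $a\cdot f = f$ at a well-chosen point. The only cosmetic difference is that the paper evaluates $(a\cdot f)(a) = f(\ominus a\oplus a) = f(e)$, giving $f(a)=f(e)$ directly, whereas you evaluate at $e$ to get $f(\ominus a)=f(e)$ and then invoke bijectivity of inversion---an extra (harmless) step that the paper's choice of evaluation point avoids.
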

\begin{proof}
It is routine to check that $\Fix{\gyrL{G}}$ is a subspace of $\gyrL{G}$. Since $a\cdot f = f$ for all $a\in G, f\in\Fix{\gyrL{G}}$, it follows that $\Fix{\gyrL{G}}$ is \mbox{invariant}. Set $W =  \cset{f_\alpha}{\alpha\in\F}$. Let $f\in\Fix{\gyrL{G}}$. For all $a\in G$,
$$
f(a) = (a\cdot f)(a) = (f\circ L_{\ominus a})(a) = f(L_{\ominus a}(a)) = f(\ominus a\oplus a) = f(e).
$$
Thus, $f = f_{f(e)}$ and so  $f\in W$. This proves $\Fix{\gyrL{G}}\subseteq W$. Let $f\in W$. Then $f = f_\alpha$ for some $\alpha\in\F$. Let $a\in G$. Note that $$(a\cdot f)(x) = f(L_{\ominus a}(x)) = \alpha = f(x)$$ for all $x\in G$. Thus, $a\cdot f = f$. Since $a$ is arbitrary, $f\in\Fix{\gyrL{G}}$. This proves $W\subseteq \Fix{\gyrL{G}}$ and so equality holds.

It is clear that $\set{f_1}$ is linearly independent. For each $\alpha\in\F$, note that $$(\alpha f_1)(x) = \alpha 1 = \alpha = f_\alpha(x)$$ for all $x\in G$. Hence, $f_\alpha = \alpha f_1$ and so $\set{f_1}$ spans $\Fix{\gyrL{G}}$.
\end{proof}

\section{The converse of Maschke's theorem}\label{sec: converse of Maschke's theorem}
Throughout this section, we assume that the characteristic of $\F$ is non-zero and $G$ is a finite gyrogroup such that $\cha{\F}$ divides $\abs{G}$. In particular,
$$\underbrace{1+1+\cdots + 1}_{\abs{G}\textrm{ copies}} = 0$$
in $\F$. Under these assumptions on $\F$ and $G$, it follows that
$$
\lsum{a\in G}{}f_\alpha(a) = \lsum{a\in G}{}\alpha = \abs{G}\alpha = 0
$$
and so $f_\alpha\in\ker{\sigma}$, where $\sigma$ is defined by \eqref{eqn: linear functional on Lgyr(G)}, for all $\alpha\in\F$. By what we have proved, the following chain of inclusions between subspaces of $L(G)$ holds:
\begin{equation}\label{eqn: chain of invariant subspace of Lgyr(G)}
\set{0} \subset \Fix{\gyrL{G}} \subseteq \ker{\sigma}\cap\gyrL{G} \subseteq \gyrL{G} \subseteq L(G).
\end{equation}
We remark that the first inclusion is proper since $\dim{(\Fix{\gyrL{G}})} = 1$ (cf. Theorem \ref{thm: characterization of Fixed subspace}) and that the last inclusion is proper if $G$ is not a group (cf. Proposition \ref{prop: relation between Lgyr(G) and L(G)}).

\begin{thm}[Converse of Maschke's theorem for gyrogroups]\label{thm: converse of Maschke's theorem for gyrogroups}
Let $G$ be a finite gyrogroup and let $\F$ be a field such that $\cha{\F}\ne 0$ and $\cha{\F}$ divides $\abs{G}$. If $\sigma(f)\ne 0$ for some $f$ in $\gyrL{G}$, where $\sigma$ is defined by \eqref{eqn: linear functional on Lgyr(G)}, then $\gyrL{G}\cap\ker{\sigma}$ does not possess an invariant direct sum complement in $\gyrL{G}$.
\end{thm}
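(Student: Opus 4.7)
The plan is to argue by contradiction. Suppose $U := \gyrL{G}\cap\ker{\sigma}$ admits an invariant direct sum complement $W$ in $\gyrL{G}$, so $\gyrL{G} = U\oplus W$. By hypothesis $\gyrL{G}\not\subseteq\ker{\sigma}$, so Theorem \ref{thm: property of kernel sigma and Lgyr(G)}\eqref{item: dimension of gyrl(G) cap kernel sigma} forces $\dim{U} = \dim{\gyrL{G}} - 1$, whence $\dim{W} = 1$. Fix $0\ne g\in W$, so that $W = \F g$. Writing an arbitrary $h\in\gyrL{G}$ as $h = u + cg$ with $u\in U$, $c\in\F$ gives $\sigma(h) = \sigma(u) + c\sigma(g) = c\sigma(g)$; applied to an $f$ with $\sigma(f)\ne 0$, this yields $\sigma(g)\ne 0$.

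The core of the argument is to evaluate $s := \sum_{a\in G}(a\cdot g)$ in two ways. On one hand, invariance of $W$ gives $a\cdot g\in W = \F g$ for every $a\in G$, hence $s\in W$. On the other hand, using the definition of the left regular action, for each $x\in G$,
\[
s(x) = \sum_{a\in G}(a\cdot g)(x) = \sum_{a\in G}g(\ominus a\oplus x).
\]
For fixed $x$, the assignment $a\mapsto\ominus a\oplus x$ is the composition of the involution $a\mapsto\ominus a$ with the right gyrotranslation $R_x$, which is a permutation of $G$ (cf. Theorem 2.22 of \cite{AU2008AHG}). Therefore as $a$ ranges over $G$ so does $\ominus a\oplus x$, yielding $s(x) = \sum_{c\in G}g(c) = \sigma(g)$, a value independent of $x$. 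Hence $s = f_{\sigma(g)}$.

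Now since $\cha{\F}$ divides $\abs{G}$, the constant function $f_\alpha$ satisfies $\sigma(f_\alpha) = \abs{G}\alpha = 0$ for every $\alpha\in\F$, so Theorem \ref{thm: characterization of Fixed subspace} gives $\Fix{\gyrL{G}}\subseteq U$. In particular $s = f_{\sigma(g)}\in\Fix{\gyrL{G}}\subseteq U$. Combined with $s\in W$, this forces $s\in U\cap W = \set{0}$, so $\sigma(g) = 0$, contradicting the first paragraph.

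The main obstacle is orchestrating the two computations of $s$: the dimension count from Theorem \ref{thm: property of kernel sigma and Lgyr(G)}\eqref{item: dimension of gyrl(G) cap kernel sigma} is what guarantees $\dim{W} = 1$ (so that $s$ automatically lies in $W$), and the gyrogroup-theoretic fact that $R_x$ is a permutation is what collapses $s$ to a constant function. The remainder is bookkeeping, exploiting the crucial identity $\Fix{\gyrL{G}}\subseteq\ker{\sigma}$ which holds precisely because $\cha{\F}$ divides $\abs{G}$.
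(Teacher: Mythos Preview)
Your proof is correct. The setup is identical to the paper's: assume an invariant complement $W$ exists, use Theorem~\ref{thm: property of kernel sigma and Lgyr(G)}\eqref{item: dimension of gyrl(G) cap kernel sigma} to get $\dim W=1$, and pick a nonzero generator. From there the two arguments diverge slightly. The paper observes that the generator $b$ cannot lie in $\Fix{\gyrL{G}}\subseteq U$, so some $h\in G$ acts by a scalar $\lambda_0\ne 1$; then the single identity $\sigma(h\cdot b)=\sigma(b)$ (via the permutation $L_{\ominus h}$) forces $(1-\lambda_0)\sigma(b)=0$, whence $b\in U$, a contradiction. You instead form the average $s=\sum_{a\in G}a\cdot g$, place it in $W$ by invariance and simultaneously identify it as the constant function $f_{\sigma(g)}\in\Fix{\gyrL{G}}\subseteq U$, and conclude $s\in U\cap W=\{0\}$. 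Your route is a clean ``two computations of the same sum'' argument and makes the role of $\Fix{\gyrL{G}}$ more explicit; the paper's route is marginally more economical, needing only one translate rather than the full sum. Both rely on the same ingredients (the dimension count, the inclusion $\Fix{\gyrL{G}}\subseteq U$ coming from $\cha\F\mid|G|$, and a permutation-of-$G$ reindexing), so the difference is one of packaging rather than substance.
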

\begin{proof}
Set $U = \gyrL{G}\cap\ker{\sigma}$. Then \eqref{eqn: chain of invariant subspace of Lgyr(G)} becomes
$$
\set{0} \subset \Fix{\gyrL{G}} \subseteq U \subseteq \gyrL{G} \subseteq L(G).
$$
Assume that $\sigma(f)\ne 0$ for some $f\in\gyrL{G}$. Hence, $\gyrL{G}\not\subseteq \ker{\sigma}$ and so by Theorem \ref{thm: property of kernel sigma and Lgyr(G)} \eqref{item: dimension of gyrl(G) cap kernel sigma}, $\dim{U} = \dim{(\gyrL{G})} - 1$. Assume to the contrary that $U$ has an invariant direct sum complement; that is, there exists an invariant subspace $W$ of $\gyrL{G}$ for which $\gyrL{G} = U\oplus W$. Thus, $\dim{W} = 1$ and so $W = \gen{b}$ for some nonzero vector $b\in\gyrL{G}$. If $g\cdot b = b$ for all $g\in G$, then we would have $b\in\Fix{\gyrL{G}}$ and would have $b\in U$, a contradiction. Hence, $h\cdot b \ne b$ for some $h\in G$. Since $W$ is invariant, it follows that $h\cdot b\in W$ and so $h\cdot b = \lambda_0 b$ for some $\lambda_0\in\F\setminus\set{1}$. Hence, $\sigma(h\cdot b) = \sigma(\lambda_0 b)$. Note that 
$$
\sigma(h\cdot b) = \lsum{a\in G}{}(h\cdot b)(a) = \lsum{a\in G}{}(b\circ L_{\ominus h})(a) = \lsum{a\in G}{}b(L_{\ominus h}(a)) = \lsum{c\in G}{}b(c) 
$$
and that $\sigma(\lambda_0 b) = \lambda_0\sigma(b) = \lambda_0\lsum{a\in G}{}b(a)$. Thus, $\lsum{a\in G}{}b(a) = \lambda_0\lsum{a\in G}{}b(a)$, which implies $\lsum{a\in G}{}b(a) = 0$ because $\lambda_0\ne 1$. This proves $b\in \ker{\sigma}$ and so $b\in U$, a contradiction. Therefore, $W$ does not exist.
\end{proof}

Although the converse to Maschke's theorem for groups is well known, we give an alternative proof using a gyrogroup-theoretic approach.

\begin{thm}[Converse of Maschke's theorem for groups]
If $G$ is a finite group and $\F$ is a field such that $\cha{\F}\ne 0$ and $\cha{\F}$ divides $\abs{G}$, then $\ker{\sigma}$, where $\sigma$ is defined by \eqref{eqn: linear functional on Lgyr(G)}, does not possess an invariant direct sum complement in $L(G)$.
\end{thm}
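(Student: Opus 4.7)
The plan is to reduce this theorem directly to the gyrogroup converse (Theorem 5.1) by exploiting the fact that groups are precisely the gyrogroups with trivial gyroautomorphisms. First, I would invoke Proposition 3.5: since $G$ is a group, every gyroautomorphism $\gyr{x,y}{}$ is the identity, and consequently $\gyrL{G} = L(G)$. This identification collapses the four-term chain in \eqref{eqn: chain of invariant subspace of Lgyr(G)} at the top: the space $\gyrL{G}$ on which the left regular action lives coincides with the full function space $L(G)$, and the invariant subspace $\gyrL{G}\cap\ker\sigma$ reduces to $\ker\sigma$ itself.

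Next I would verify the hypothesis of Theorem 5.1, namely that $\sigma$ does not vanish identically on $\gyrL{G}$. This is immediate: the delta function $\delta_e$ lies in $L(G) = \gyrL{G}$, and $\sigma(\delta_e) = \sum_{a\in G}\delta_e(a) = 1 \ne 0$ in $\F$.

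Having checked both ingredients, Theorem \ref{thm: converse of Maschke's theorem for gyrogroups} applies and states that $\gyrL{G}\cap\ker\sigma$ has no invariant direct sum complement in $\gyrL{G}$. Translating back through $\gyrL{G} = L(G)$ yields exactly the conclusion: $\ker\sigma$ has no invariant direct sum complement in $L(G)$ under the left regular action of $G$. The proof is thus a two-line deduction from the gyrogroup case.

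I do not expect any real obstacle here; the entire work has already been done in Theorem 5.1, and the role of the group hypothesis is simply to furnish the identification $\gyrL{G} = L(G)$ via Proposition 3.5. The only point worth being careful about is noting that the left regular action of $G$ on $\gyrL{G}$ defined by $a\cdot f = f\circ L_{\ominus a}$ specializes, when $G$ is a group, to the standard left regular action on $L(G)$, so that \qt{invariant} means the same thing in both frameworks. No separate argument is needed for that, because the definition \eqref{eqn: left regular action on Lgyr(G)} makes no reference to gyroautomorphisms.
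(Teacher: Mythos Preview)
Your proposal is correct and follows essentially the same route as the paper: identify $\gyrL{G}=L(G)$ via the proposition characterizing when these coincide, exhibit $\delta_e$ as a witness that $\sigma$ is not identically zero on $\gyrL{G}$, and invoke Theorem~\ref{thm: converse of Maschke's theorem for gyrogroups}. The only slip is a numbering issue---the relevant proposition is Proposition~\ref{prop: relation between Lgyr(G) and L(G)} (in Section~4), not ``Proposition~3.5.''
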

\begin{proof}
As $G$ is a group, $\gyrL{G} = L(G)$ by Proposition  \ref{prop: relation between Lgyr(G) and L(G)}. Since $\delta_e$, $e$ being the identity of $G$, belongs to $L(G) = \gyrL{G}$ and $\sigma(\delta_e)\ne 0$, it follows from Theorem \ref{thm: converse of Maschke's theorem for gyrogroups} that there is no invariant subspace $W$ of $L(G)$ such that $L(G) = \ker{\sigma}\oplus W$.
\end{proof}

\vspace{0.3cm}
\noindent{\bf Acknowledgements.} The author would like to thank Keng Wiboonton for his collaboration. \red{He is grateful to the referee for careful reading of the  manuscript and invaluable comments.} This research was supported by the Research Fund for DPST Graduate with First Placement, Year 2016, under grant No. 028/2559 and Chiang Mai University.

\bibliographystyle{amsplain}\addcontentsline{toc}{section}{References}
\bibliography{References}
\end{document}